\newtheorem{theorem}{Theorem}
\newtheorem{lemma}{Lemma}
\author{Herbert Batte$ ^{1} $, Taboka P. Chalebgwa$ ^{2} $, and Mahadi Ddamulira$  ^{1,3,*}$}
\title{Perrin numbers that are concatenations of two distinct repdigits}
\date{}
\begin{document}
\maketitle

\begin{abstract}
\noindent Let $ (P_n)_{n\ge 0}$ be the sequence of Perrin numbers defined by ternary relation $ P_0=3 $, $ P_1=0 $, $ P_2=2 $, and $ P_{n+3}=P_{n+1}+P_n $ for all $ n\ge 0 $. In this paper, we use Baker's theory for nonzero linear forms in logarithms of algebraic numbers and the reduction procedure involving the theory of continued fractions, to explicitly determine all Perrin numbers that are concatenations of two distinct repeated digit numbers.
\end{abstract}

\noindent
{\bf Keywords and phrases}: Perrin number; Repdigit; Linear form in logarithms; Baker's method.

\noindent 
{\bf 2020 Mathematics Subject Classification}: 11B37, 11D61, 11J86.

\noindent 
\thanks{$ ^{*} $ Corresponding author}

\section{Introduction.}

\noindent Let $(P_n)_{n \ge 0}$ be the sequence of Perrin numbers, given by the ternary recurrence relation
\begin{align*}
P_{n+3} = P_{n+1} + P_{n}, \quad \text{for $n \geq 0$, with the initial conditions $P_0 = 3$, $P_1 = 0, $ and $P_2 = 2$}.
\end{align*}
The first few terms of this sequence are
\begin{align*}
(P_n)_{n \geq 0} = \{3, 0, 2, 3, 2, 5, 5, 7, 10, 12, 17, 22, 29, 39, 51, 68, 90, 119, 158, 209, 277, 367, 486, \ldots\}.
\end{align*}
A \textit{repdigit} (in base $10$) is a non negative integer $N$ that has only one distinct digit. That is, the decimal expansion of $N$ takes the form
\begin{align*}
N = \overline{\underbrace{d\cdots d}_\text{$\ell$ times}} = d \left( \frac{10^\ell - 1}{9} \right),
\end{align*}
for some non negative integers $d$ and $\ell$ with $0 \leq d \leq 9$ and $l \geq 1$. This paper is an addition to the growing literature around the study of Diophantine properties of certain linear recurrence sequences. More specifically, our paper focuses on a Diophantine equation involving the Perrin numbers and repdigits. This is a variation on a theme on the analogous problem for the Padovan numbers, a program developed in \cite{LH} and \cite{MD2}.
\medskip

\noindent In \cite{LH}, the authors found all repdigits that can be written as a sum of two Padovan numbers. This result was later extended by the third author to repdigits that are a sum of three Padovan numbers in \cite{MD1}. In another direction, in \cite{MD2}, Ddamulira considered all Padovan numbers that can be written as a concatenation of two repdigits and showed that the largest such number is $Pad(21) = 200$. More formally, it was shown that if $Pad(n)$ is a solution of the Diophantine equation $Pad(n) = \overline{\underbrace{d_1\cdots d_1}_\text{$\ell$ times} \underbrace{d_2\cdots d_2}_\text{$m$ times}}$, then
\begin{align*}
Pad(n) \in \{12,16,21,28,37,49,65,86,114,200 \}.
\end{align*}
The Padovan numbers and Perrin numbers share many similar properties. In particular, they have the same recurrence relation, the difference being that the Padovan numbers are initialized via $Pad(0) = 0$ and $Pad(1) = Pad(2) = 1$. This means that the two sequences also have the same characteristic equation.
\medskip

\noindent Despite the similarities, the two sequences also have some stark differences. For instance, the Perrin numbers satisfy the remarkable divisibility property that if $n$ is prime, then $n$ divides $P_n$. One can easily confirm that this does not hold for the Padovan numbers.

\noindent Inspired by the second author's result in \cite{MD2}, we study and completely solve the Diophantine equation:
\begin{align}\label{eq1}
P_n = \overline{\underbrace{d_1\cdots d_1}_\text{$\ell$ times} \underbrace{d_2\cdots d_2}_\text{$m$ times} } = d_1 \left(  \frac{10^\ell - 1}{9} \right) \times 10^m + d_2 \left(  \frac{10^m - 1}{9} \right),
\end{align}
where $d_1\ne d_2 \in \{0, 1, 2, \ldots, 9 \}$, $d_1>0 $,  $\ell, m \ge 1$, and $n \ge 0$.
\medskip

\noindent We ignore the $d_1 = d_2$ case for the time being, since it has been covered within a more general context in an upcoming paper, where we study the reverse question of repdigits which are sums of Perrin numbers. In any case, the only such Perrin number which is a solution of the above Diophantine equation is $P_{11} = 22$.
\medskip

\noindent Our main result is the following.
\begin{theorem}\label{thm1x}
The only Perrin numbers which are concatenations of two distinct repdigits are 
\begin{align*}
P_n \in \{ 10, 12, 17, 29, 39, 51, 68, 90, 119, 277, 644 \}.
\end{align*}

\end{theorem}

\section{Preliminary Results.}
In this section we collect some facts about Perrin numbers and other preliminary lemmas that are crucial to our main argument.

\subsection{Some properties of the Perrin numbers.}
Recall that the characteristic equation of the Perrin sequence is given by $\phi (x) := x^3 - x - 1=0$, with zeros $\alpha$, $\beta$ and $\gamma = \overline{\beta}$ given by:
\begin{align*}
\alpha = \frac{r_1 + r_2}{6} \quad \text{and} \quad \beta = \frac{-(r_1 + r_2) + i \sqrt{3}(r_1 - r_2)}{12},
\end{align*}
where
\begin{align*}
r_1 = \sqrt[3]{108 + 12 \sqrt{69}} \quad \text{and} \quad r_2 = \sqrt[3]{108 - 12 \sqrt{69}}.
\end{align*}
For all $n \geq 0$, Binet's formula for the Perrin sequence tells us that the $n$th Perrin number is given by
\begin{align}\label{eq2}
P_n = \alpha^n + \beta^n + \gamma^n.
\end{align}
Numerically,  the following estimates hold for the quantities $\{\alpha, \beta, \gamma \}$:
\begin{align*}
1.32 < & \alpha < 1.33, \\
0.86 < |\beta| = & |\gamma| = \alpha^{-\frac{1}{2}} < 0.87.
\end{align*}
It follows that the complex conjugate roots $\beta$ and $\gamma$ only have a minor contribution to the right hand side of equation \eqref{eq2}. More specifically, let
\begin{align*}
e(n) := P_n - \alpha^n =  \beta^n + \gamma^n. \quad \text{Then,} \quad |e(n)| < \frac{3}{\alpha^{n/2}} \quad \text{for all} \quad n \geq 1.
\end{align*}
The following estimate also holds:
\begin{lemma}\label{lem1}
Let $n \geq 2$ be a positive integer. Then
\begin{align*}
\alpha^{n-2} \leq P_n \leq \alpha^{n+1}.
\end{align*}
\end{lemma}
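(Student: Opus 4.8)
The plan is to prove both inequalities simultaneously by strong induction on $n$, exploiting the defining recurrence $P_{n+3}=P_{n+1}+P_n$ together with the single algebraic identity $\alpha^3=\alpha+1$ coming from the characteristic equation $\phi(\alpha)=0$. This identity is what makes the induction close up cleanly: it converts a sum of two consecutive powers of $\alpha$ into a single higher power, which is exactly the shape produced by the recurrence.

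First I would dispose of the base cases. Since the recurrence is of order three and relates $P_{n+3}$ to $P_{n+1}$ and $P_n$, I need to verify the claim for three consecutive starting indices, say $n=2,3,4$, which amounts to checking the numerical inequalities $\alpha^0\le P_2\le\alpha^3$, $\alpha^1\le P_3\le\alpha^4$, and $\alpha^2\le P_4\le\alpha^5$ using $P_2=2$, $P_3=3$, $P_4=2$ and the estimate $1.32<\alpha<1.33$. These are immediate.

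For the inductive step, I would fix $n\ge 2$ and assume $\alpha^{k-2}\le P_k\le\alpha^{k+1}$ for all $k$ with $2\le k\le n+2$. For the upper bound,
\begin{align*}
P_{n+3}=P_{n+1}+P_n\le\alpha^{n+2}+\alpha^{n+1}=\alpha^{n+1}(\alpha+1)=\alpha^{n+1}\cdot\alpha^3=\alpha^{(n+3)+1},
\end{align*}
and symmetrically for the lower bound,
\begin{align*}
P_{n+3}=P_{n+1}+P_n\ge\alpha^{n-1}+\alpha^{n-2}=\alpha^{n-2}(\alpha+1)=\alpha^{n-2}\cdot\alpha^3=\alpha^{(n+3)-2},
\end{align*}
which is precisely the claim for the index $n+3$. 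This completes the induction.

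There is no genuine obstacle here; the only point requiring care is bookkeeping, namely making sure that enough consecutive base cases are verified so that every later index is reached by the recurrence, and that the inductive hypothesis is invoked only at indices $n$ and $n+1$, both of which lie in the verified range $2\le k\le n+2$ once $n\ge 2$. I would also remark that the alternative route through Binet's formula $P_n=\alpha^n+e(n)$ with $|e(n)|<3\alpha^{-n/2}$ is less convenient here: that error bound is too crude for small $n$, since it exceeds the gap $\alpha^{n+1}-\alpha^n$ until $n$ is around $6$, so one would in any case have to treat several small values by hand, whereas the inductive argument handles all $n\ge 2$ uniformly.
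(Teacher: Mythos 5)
Your proof is correct and is exactly the argument the paper intends: the paper dispatches Lemma~\ref{lem1} with the one-line remark that it ``follows from a simple inductive argument, and the fact that $\alpha^3 = \alpha + 1$,'' and your write-up simply fills in that induction (with the right three base cases $n=2,3,4$ and the identity $\alpha^3=\alpha+1$ closing the step). No discrepancy to report.
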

\noindent Lemma \ref{lem1} follows from a simple inductive argument, and the fact that $\alpha^3 = \alpha + 1$, from the characteristic polynomial $\phi$.
\medskip

\noindent Let $\mathbb{K} := \mathbb{Q}(\alpha, \beta)$ be the splitting field of the polynomial $\phi$ over $\mathbb{Q}$. Then $[\mathbb{K}: \mathbb{Q}] = 6$ and $[\mathbb{Q}(\alpha): \mathbb{Q}] = 3$. We note that, the Galois group of $\mathbb{K}/\mathbb{Q}$ is given by
\begin{align*}
\mathcal{G} := \text{Gal}(\mathbb{K}/\mathbb{Q}) \cong \{ (1), (\alpha \beta), (\alpha \gamma), (\beta \gamma), (\alpha \beta \gamma)  \} \cong S_3.
\end{align*}
We therefore identify the automorphisms of $\mathcal{G}$ with the permutation group of the zeroes of $\phi$. We highlight the permutation $(\alpha \beta)$, corresponding to the automorphism $\sigma : \alpha \mapsto \beta, \beta \mapsto \alpha, \gamma \mapsto \gamma$, which we use later to obtain a contradiction on the size of the absolute value of a certain bound.

\subsection{Linear forms in logarithms.}
Our approach follows the standard procedure of obtaining bounds for certain linear forms in (nonzero) logarithms. The upper bounds are obtained via a manipulation of the associated Binet's formula for the given sequence. For the lower bounds, we need the celebrated Baker's theorem on linear forms in logarithms. Before stating the result, we need the definition of the (logarithmic) Weil height of an algebraic number.
\medskip

\noindent Let $\eta$ be an algebraic number of degree $d$ with minimal polynomial
\begin{align*}
P(x) = a_0 \prod_{j = 1}^d (x - \alpha_j),
\end{align*}
where the leading coefficient $a_0$ is positive and the $\alpha_j$'s are the conjugates of $\alpha$. The logarithmic height of $\eta$ is given by
\begin{align*}
h(\eta) := \frac{1}{d} \left( \log a_0 + \sum_{j = 1}^d \log \left( \max \{|\alpha_j|, 1  \}  \right)   \right).
\end{align*}
Note that, if $\eta = \frac{p}{q} \in \mathbb{Q}$ is a reduced rational number with $q > 0$, then the above definition reduces to $h(\eta) = \log \max \{ |p|,q \}$. We list some well known properties of the height function below, which we shall subsequently use without reference:
\begin{align*}
h(\eta_1 \pm \eta_2) & \leq h(\eta_1) + h(\eta_2) + \log 2, \\
h(\eta_1 \eta_2 ^{\pm 1}) & \leq h(\eta_1) + h(\eta_2), \\
h(\eta^s) & = |s| h(\eta), \quad (s \in \mathbb{Z}).
\end{align*}
We quote the version of Baker's theorem proved by Bugeaud, Mignotte and Siksek (\cite{BMS}, Theorem 9.4).
\begin{theorem}[Bugeaud, Mignotte, Siksek, \cite{BMS}]\label{thm1}
Let $\eta_1, \ldots, \eta_t$ be positive real algebraic numbers in a real algebraic number field $\mathbb{K} \subset \mathbb{R}$ of degree $D$. Let $b_1, \ldots, b_t$ be nonzero integers such that 
\begin{align*}
\Gamma := \eta_1 ^{b_1} \ldots \eta_t ^{b_t} - 1 \neq 0.
\end{align*}
Then
\begin{align*}
\log | \Gamma| > - 1.4 \times 30^{t+3} \times t^{4.5} \times D^2 (1 + \log D)(1 + \log B)A_1 \ldots A_t,
\end{align*}
where
\begin{align*}
B \geq \max \{|b_1|, \ldots, |b_t| \},
\end{align*}
and
\begin{align*}
A_j \geq \max \{ D h(\eta_j), |\log \eta_j|, 0.16  \}, \quad \text{for all} \quad j = 1, \ldots,t.
\end{align*}
\end{theorem}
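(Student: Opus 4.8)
The plan is to prove the stated lower bound by Baker's method of auxiliary functions, arranged so as to yield Matveev's explicit constants (the statement is precisely Matveev's estimate in the real case, as restated in \cite{BMS}). First I would pass from the multiplicative form to the additive linear form $\Lambda := b_1 \log \eta_1 + \cdots + b_t \log \eta_t$. Since each $\eta_j$ is a positive real in $\mathbb{K} \subset \mathbb{R}$, $\Lambda$ is a well-defined real number and $\Gamma = e^{\Lambda} - 1$. If $|\Lambda| \geq 1$ the asserted inequality is immediate, so I may assume $|\Lambda|$ is small, whence $|\Gamma| \geq \tfrac{1}{2}|\Lambda|$; it therefore suffices to bound $|\Lambda|$ from below. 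I would then argue by contradiction, supposing $|\Lambda|$ to be smaller than the reciprocal of the asserted quantity.

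Next I would construct an auxiliary function. Fixing integer parameters $L_0, L_1, \ldots, L_t$ (degree ranges) and a multiplicity parameter, I consider the entire function
\[
\Phi(z) = \sum_{\lambda_0 = 0}^{L_0} \sum_{\lambda_1=0}^{L_1} \cdots \sum_{\lambda_t=0}^{L_t} p(\lambda_0, \ldots, \lambda_t)\, z^{\lambda_0}\, \eta_1^{\lambda_1 z} \cdots \eta_t^{\lambda_t z},
\]
whose unknown coefficients $p(\cdot)$ are rational integers and whose exponentials are read through the real logarithms $\log \eta_j$. Imposing that $\Phi$ and its derivatives up to a prescribed order vanish at integer points $z = 0, 1, \ldots, K$ gives a homogeneous linear system in the $p(\cdot)$ with more unknowns than equations; by Siegel's (Thue--Siegel) lemma there is a nontrivial integer solution whose coefficients are explicitly bounded in terms of $B$, the $A_j$, and $D$. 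Here the smallness of $\Lambda$ says that the $\log \eta_j$ are nearly linearly dependent over $\mathbb{Q}$ with coefficients $b_j$, which is what drives the extrapolation.

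Then I would carry out the extrapolation step. Using $|\Lambda| \ll 1$ together with the maximum-modulus principle (a Schwarz-type lemma and a Hermite interpolation estimate on discs of growing radius), I would show that $\Phi$ and its low-order derivatives stay very small at many further integer points, and that whenever such a value is, after clearing denominators, a nonzero algebraic integer all of whose conjugates are small, the product formula forces it to vanish. Iterating this argument makes $\Phi$ vanish to high order on a set of points far larger than the number of conditions originally imposed. The final and deepest ingredient is a zero estimate: a multiplicity estimate on the commutative algebraic group $\mathbb{G}_a \times \mathbb{G}_m^t$ (Philippon--W\"ustholz), which guarantees that a function of the above shape cannot vanish to such high order on so many points unless $p(\cdot)$ is identically zero, contradicting its nontrivial construction. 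The contradiction is avoidable only if $|\Lambda|$ exceeds the claimed bound, and tracking the parameters through Siegel's lemma, the analytic estimates, and the zero estimate produces the explicit constant: the factor $A_1 \cdots A_t$ records the heights of the $\eta_j$, the factor $(1 + \log B)$ the size of the $b_j$, the factor $D^2(1+\log D)$ the passage to $\mathbb{K}$ and the control of conjugates and denominators, and $30^{t+3} t^{4.5}$ the combinatorial cost of the $t$-dimensional construction. I expect the zero estimate to be the main obstacle, since it is exactly there that degeneracy must be ruled out while every parameter is kept explicit; moreover, obtaining Matveev's sharp dependence $D^2$ (rather than the weaker dependence of Baker--W\"ustholz) additionally requires exploiting Kummer-theoretic descent in $\mathbb{G}_m^t$, which is the technical core of the sharpened estimate.
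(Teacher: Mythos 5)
The paper contains no proof of this statement to compare yours against: Theorem~\ref{thm1} is quoted verbatim from Bugeaud, Mignotte and Siksek \cite{BMS} (Theorem 9.4 there), itself a consequence of Matveev's theorem, and the authors use it strictly as a black box. That said, your outline does correctly reproduce the architecture of the proof as it exists in the literature: the passage from $\Gamma = e^{\Lambda}-1$ to the linear form $\Lambda$ (with the easy reduction to small $|\Lambda|$), an auxiliary function constructed by Siegel's lemma, extrapolation via Schwarz-type interpolation estimates combined with a Liouville/product-formula argument to force vanishing at new points, a multiplicity (zero) estimate on $\mathbb{G}_a \times \mathbb{G}_m^t$ in the style of Philippon--W\"ustholz, and Kummer descent to reach the sharp $D^2(1+\log D)$ dependence. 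As a map of where the theorem comes from, this is accurate.

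But as a proof it has a genuine and, for this particular theorem, decisive gap: every quantitative step is named rather than executed, and here the quantitative part is the entire content. A qualitative (even effective but non-explicit) lower bound for $|\Lambda|$ is classical Baker theory; what the statement asserts, and what the surrounding paper actually needs to derive $\ell \log 10 < 1.46\times 10^{30}(1+\log n)$ and ultimately $n < 4.6\times 10^{48}$, is the explicit constant $1.4\times 30^{t+3}t^{4.5}D^2(1+\log D)$ together with the precise admissible choices $A_j \ge \max\{Dh(\eta_j), |\log\eta_j|, 0.16\}$ and $B \ge \max_j |b_j|$. Your sketch never fixes the parameters $L_0,\dots,L_t$, the multiplicity order, or the number of extrapolation points; never states Siegel's lemma or the zero estimate in a form with explicit constants; and never performs the optimization that converts those ingredients into the stated numerical factor --- each of these occupies a substantial portion of Matveev's two long papers, and the $0.16$ floor and the exponent $4.5$ in particular cannot be recovered from the outline. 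Within this paper the correct ``proof'' is exactly what the authors do, namely cite \cite{BMS}; a from-scratch derivation along your lines is a research-length undertaking, not a gap one can close by elaborating the sketch.
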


\subsection{Reduction procedure.}
The bounds on the variables obtained via Baker's theorem are usually too large for any computational purposes. In order to get further refinements, we use the Baker-Davenport reduction procedure. The variant we apply here is the one due to Dujella and Peth\H{o} (\cite{DP}, Lemma 5a). For a real number $r$, we denote by $\parallel r \parallel$ the quantity $\min \{|r - n| : n \in \mathbb{Z} \}$, the distance from $r$ to the nearest integer.
\begin{lemma}[Dujella, Peth\H o, \cite{DP}]\label{ls}
Let $\kappa \neq 0, A, B$ and $\mu$ be real numbers such that $A > 0$ and $B > 1$. Let $M > 1$ be a positive integer and suppose that $\frac{p}{q}$ is a convergent of the continued fraction expansion of $\kappa$ with $q > 6M$. Let 
\begin{align*}
\varepsilon := \parallel \mu q \parallel - M \parallel \kappa q \parallel.
\end{align*}
If $\varepsilon > 0$, then there is no solution of the inequality
\begin{align*}
0 < |m \kappa - n + \mu| < AB^{-k}
\end{align*}
in positive integers $m,n,k$ with
\begin{align*}
\frac{\log (Aq/\varepsilon)}{\log B} \leq k \quad \text{and} \quad m \leq M.
\end{align*}
\end{lemma}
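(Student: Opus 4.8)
The plan is to argue by contradiction: I would assume there is a triple of positive integers $(m,n,k)$ with $m \le M$, with $k \ge \log(Aq/\varepsilon)/\log B$, and with $0 < |m\kappa - n + \mu| < AB^{-k}$, and then derive a contradiction by producing a \emph{lower} bound for $|m\kappa - n + \mu|$ that already exceeds $AB^{-k}$. The mechanism is that multiplying the linear form by the convergent denominator $q$ turns it into an integer plus two controlled terms, one of which is pinned down in size by the best-approximation property of continued fractions.

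First I would multiply through by $q$ and rearrange, using the integer identity
\[
q(m\kappa - n + \mu) = m(q\kappa - p) + \mu q + (mp - nq).
\]
Here $t := mp - nq$ is an integer, so $\mu q + t$ is a translate of $\mu q$ by an integer and therefore satisfies $|\mu q + t| \ge \parallel \mu q \parallel$. For the remaining term I would invoke the standard fact that for a convergent $p/q$ of the continued fraction expansion of $\kappa$ the numerator $p$ is the nearest integer to $q\kappa$, so that $|q\kappa - p| = \parallel \kappa q \parallel$. The hypothesis $q > 6M > 1$ ensures we are working with a convergent of denominator larger than $1$ (sidestepping the degenerate $q_0 = 1$ case, where $p$ need not be the nearest integer) and moreover forces $M\parallel \kappa q\parallel < M/q < 1/6$, which is precisely what makes the assumption $\varepsilon > 0$ attainable in practice.

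Now I would apply the reverse triangle inequality together with $m \le M$ and $\parallel\kappa q\parallel \ge 0$ to obtain
\[
|q(m\kappa - n + \mu)| \ge |\mu q + t| - m\,|q\kappa - p| \ge \parallel \mu q \parallel - M \parallel \kappa q \parallel = \varepsilon .
\]
Since $\varepsilon > 0$, this reconfirms that the form is nonzero and gives $|m\kappa - n + \mu| \ge \varepsilon/q$. On the other hand, the lower bound $k \ge \log(Aq/\varepsilon)/\log B$ with $B > 1$ gives $B^k \ge Aq/\varepsilon$, equivalently $AB^{-k} \le \varepsilon/q$. Chaining the two estimates yields $|m\kappa - n + \mu| \ge \varepsilon/q \ge AB^{-k}$, contradicting the assumed strict inequality $|m\kappa - n + \mu| < AB^{-k}$; hence no such solution can exist.

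I expect the only genuinely delicate point to be the justification of $|q\kappa - p| = \parallel \kappa q \parallel$ for a convergent: this is exactly the assertion that convergents are best approximations of the second kind, which I would either cite from a standard continued-fraction reference or derive from the defining recurrences and the bound $|q_k\kappa - p_k| < 1/q_{k+1}$. Everything else is elementary—the reverse triangle inequality, the integrality of $mp - nq$, and the monotonicity in $m \le M$—so the real content of the argument lies in arranging the hypotheses so that the convergent estimate from below and the $k$-bound from above meet precisely at the common threshold $\varepsilon/q$.
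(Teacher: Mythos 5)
Your proof is correct: the paper itself states this lemma without proof, citing it directly from Dujella and Peth\H{o} (\cite{DP}, Lemma 5a), and your argument is exactly the standard one from that source --- multiply the form by $q$, bound $|\mu q + (mp - nq)| \ge \parallel \mu q \parallel$ via integrality of $mp-nq$, use the best-approximation property $|q\kappa - p| = \parallel \kappa q \parallel$ of convergents (correctly justified, since $q > 6M > 1$ rules out the degenerate $q_0 = 1$ case), and chain $|m\kappa - n + \mu| \ge \varepsilon/q \ge AB^{-k}$ against the hypothesis. No gaps; the one delicate point you flagged is indeed the only one, and you handled it properly.
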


\noindent
Lemma \ref{ls} cannot be applied when $\mu=0$ (since then $\varepsilon<0$). In this case, we use the following criterion due to Legendre, a well--known result from the theory of Diophantine approximation. For further details, we refer the reader to the books of Cohen \cite{HC1, HC2}.
\begin{lemma}[Legendre, \cite{HC1, HC2}]
\label{lg}
Let $\kappa$ be real number and $x,y$ integers such that
\begin{align*}
\left|\kappa-\frac{x}{y}\right|<\frac{1}{2y^2}.
\end{align*}
Then $x/y=p_k/q_k$ is a convergent of $\kappa$. Furthermore, let  $ M $ and $ N $ be a nonnegative integers such that $ q_N> M $. Then putting $ a(M):=\max\{a_{i}: i=0, 1, 2, \ldots, N\} $, the inequality 
\begin{align*}
\left|\kappa-\frac{x}{y}\right|\ge \frac{1}{(a(M)+2)y^2},
\end{align*}
holds for all pairs $ (x,y) $ of positive integers with $ 0<y<M $.
\end{lemma}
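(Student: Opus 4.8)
The plan is to prove the two assertions in turn, deriving the quantitative lower bound from the classical criterion together with the exact error estimate for convergents. Throughout I may assume $\gcd(x,y)=1$: if $d=\gcd(x,y)>1$, replacing $(x,y)$ by $(x/d,y/d)$ leaves the quantity $\kappa-\frac{x}{y}$ unchanged while shrinking the denominator, so the hypothesis $|\kappa-\frac{x}{y}|<\frac{1}{2y^2}$ is inherited and the desired conclusion for the reduced fraction is stronger than the one asserted for $(x,y)$.

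For the first assertion (Legendre's criterion), I would write $\kappa-\frac{x}{y}=\varepsilon\theta/y^{2}$ with $\varepsilon\in\{\pm1\}$ and $0\le\theta<\frac12$ (the case $\theta=0$ being trivial, as then $\kappa=x/y$ is its own last convergent). Expand the rational $x/y$ as a finite continued fraction $[a_0;a_1,\dots,a_n]=p_n/q_n$ with $q_n=y$. Using the identity $[a_0;\dots,a_n]=[a_0;\dots,a_n-1,1]$ I can prescribe the parity of $n$, and I choose it so that $\varepsilon=(-1)^{n}$. Introduce the real number $\omega$ defined by $\kappa=\frac{\omega p_n+p_{n-1}}{\omega q_n+q_{n-1}}$; solving for $\omega$ and feeding in $q_n\kappa-p_n=\varepsilon\theta/q_n$ together with $p_nq_{n-1}-p_{n-1}q_n=(-1)^{n-1}$ yields the clean expression $\omega=\frac1\theta-\frac{q_{n-1}}{q_n}$. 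Since $0<\theta<\frac12$ gives $\frac1\theta>2$ while $0\le\frac{q_{n-1}}{q_n}\le1$, we obtain $\omega>1$. Hence $\kappa=[a_0;a_1,\dots,a_n,\omega]$ is an admissible continued fraction expansion of $\kappa$, and therefore $x/y=p_n/q_n$ is one of its convergents.

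For the quantitative lower bound, fix $(x,y)$ with $0<y<M$. Since $a(M)\ge0$ we have $\frac{1}{(a(M)+2)y^2}\le\frac{1}{2y^2}$, so if $|\kappa-\frac{x}{y}|\ge\frac{1}{2y^2}$ the claim is immediate. Otherwise $|\kappa-\frac{x}{y}|<\frac{1}{2y^2}$, and the first part gives $x/y=p_k/q_k$ with $q_k=y$. Because $q_k=y<M<q_N$ and the denominators $q_i$ are increasing, we get $k<N$, hence $k+1\le N$; in particular $a_{k+1}\le a(M)$ and the complete quotient $\alpha_{k+1}=[a_{k+1};a_{k+2},\dots]$ exists. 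Plugging this into the exact formula $|\kappa-\frac{p_k}{q_k}|=\frac{1}{q_k(\alpha_{k+1}q_k+q_{k-1})}$, and bounding $\alpha_{k+1}<a_{k+1}+1\le a(M)+1$ together with $q_{k-1}\le q_k$, the denominator is at most $(a(M)+2)q_k^2$, which forces $|\kappa-\frac{x}{y}|>\frac{1}{(a(M)+2)y^2}$, as required.

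The step demanding the most care is the parity/sign bookkeeping in the first part: one must exploit the two continued-fraction representations of $x/y$ to arrange $\varepsilon=(-1)^n$ and then verify $\omega>1$ uniformly, including the boundary behaviour $\frac{q_{n-1}}{q_n}\le1$ for small $n$. A secondary but essential subtlety in the second part is ensuring that the complete quotient $\alpha_{k+1}$ is genuinely defined, i.e. that the expansion of $\kappa$ does not terminate at index $k$; this is exactly where the hypothesis $q_N>M$ enters, for were the expansion to terminate at $k$, then $p_k/q_k$ would be the last convergent and $q_N\le q_k=y<M$ would contradict $q_N>M$.
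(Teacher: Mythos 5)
Your proof is correct, but there is nothing in the paper to compare it against: the paper states this lemma without proof, attributing it to Legendre and referring the reader to Cohen's books \cite{HC1, HC2} for details. What you give is exactly the standard argument from those references. Your central identity checks out: with $\kappa q_n - p_n = (-1)^n\theta/q_n$ and $p_nq_{n-1}-p_{n-1}q_n=(-1)^{n-1}$, solving $\kappa=\frac{\omega p_n+p_{n-1}}{\omega q_n+q_{n-1}}$ indeed gives $\omega=\frac{1}{\theta}-\frac{q_{n-1}}{q_n}$, and you correctly obtain the \emph{strict} inequality $\omega>1$ (which matters: if $\omega=1$ then $[a_0;\dots,a_n,1]=[a_0;\dots,a_n+1]$ and $p_n/q_n$ would fail to be a convergent of the canonical expansion). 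You also correctly identify and dispose of the two genuine subtleties in the second half: that $q_k=y<M<q_N$ and the monotonicity of the $q_i$ force $k+1\le N$, hence $a_{k+1}\le a(M)$; and that the hypothesis $q_N>M$ rules out termination of the expansion at index $k$ (which also covers the degenerate possibility $\kappa=x/y$, where the claimed lower bound would otherwise be false). The complete-quotient bound $\alpha_{k+1}<a_{k+1}+1$ together with $q_{k-1}\le q_k$ in the exact formula $\bigl|\kappa-\frac{p_k}{q_k}\bigr|=\frac{1}{q_k(\alpha_{k+1}q_k+q_{k-1})}$ then delivers the stated inequality, so the argument is complete.
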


\noindent We will also need the following lemma by G\'{u}zman S\'{a}nchez and Luca (\cite{GSL}, Lemma 7):
\begin{lemma}[G\'{u}zman S\'{a}nchez, Luca, \cite{GSL}]\label{l3}
Let $r \geq 1$ and $H > 0$ be such that $H > (4r^2)^r$ and $H > L/(\log L)^r$. Then
\begin{align*}
L < 2^r H (\log H)^r.
\end{align*}
\end{lemma}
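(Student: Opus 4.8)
The plan is to reduce the whole inequality to the single estimate $L < H^2$, after which the conclusion is immediate. Indeed, the hypothesis $H > L/(\log L)^r$ is just a rewriting of $L < H(\log L)^r$ (here $\log L > 0$, since otherwise the hypothesis is vacuous; the degenerate case $L \le 1$ is trivial because $H > (4r^2)^r \ge 4$ forces $2^r H(\log H)^r = H(2\log H)^r > 1 \ge L$). So once I establish $L < H^2$, equivalently $\log L < 2\log H$, I can feed this back into $L < H(\log L)^r$ to obtain
\[
L < H(\log L)^r < H(2\log H)^r = 2^r H(\log H)^r,
\]
which is exactly the desired bound. Hence the entire content of the lemma is the claim $L < H^2$.

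To prove $L < H^2$ I would argue by contradiction, assuming $L \ge H^2$. Taking logarithms in $L < H(\log L)^r$ gives $\log L < \log H + r\log\log L$, and since the assumption yields $\log H \le \tfrac12\log L$, this collapses to $\log L < 2r\log\log L$. Writing $v := \log L$, this reads $v < 2r\log v$. On the other hand, combining $L \ge H^2$ with the hypothesis $H > (4r^2)^r$ forces $v = \log L > 2r\log(4r^2) =: v_0$. The crux is then a monotonicity argument applied to $f(v) := v - 2r\log v$. Since $f'(v) = 1 - 2r/v > 0$ once $v > 2r$, and $v_0 > 2r$ (because $4r^2 > e$ for every $r \ge 1$), the function $f$ is increasing on $[v_0,\infty)$. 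A direct computation gives
\[
f(v_0) = 2r\log\!\bigl(2r/\log(4r^2)\bigr),
\]
which is strictly positive precisely because $2r > \log(4r^2)$ for all $r \ge 1$ — an elementary fact, since $g(r) := 2r - \log(4r^2)$ satisfies $g(1) = 2 - \log 4 > 0$ and $g'(r) = 2 - 2/r \ge 0$. Therefore $f(v) > f(v_0) > 0$, i.e.\ $v > 2r\log v$, contradicting $v < 2r\log v$. This contradiction yields $L < H^2$ and completes the argument.

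I expect the main obstacle to be calibrating the threshold step so that the hypothesis $H > (4r^2)^r$ is used in exactly the right strength: the whole contradiction hinges on the sharp elementary inequality $2r > \log(4r^2)$, which is what makes $f(v_0)>0$, and this is tight enough that a materially weaker hypothesis on $H$ would not close the loop. Everything else — the initial reduction to $L < H^2$, the logarithmic bootstrapping, and the final chain of inequalities — is routine once this monotonicity-plus-threshold estimate is in place, so I would spend my care precisely on verifying that $v_0 > 2r$ and $f(v_0) \ge 0$ hold for the full range $r \ge 1$.
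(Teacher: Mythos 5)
Your proof is correct, and the delicate points all check out: the degenerate case $L\le 1$ is handled before $(\log L)^r$ is manipulated; the step $\log(L) < \log H + r\log\log L$ is legitimate because the contradiction hypothesis $L\ge H^2 > (4r^2)^{2r} \ge 16$ forces $\log L>1$, so $\log\log L$ exists and is positive; and the two elementary inequalities your threshold argument rests on, namely $v_0=2r\log(4r^2)>2r$ (since $4r^2\ge 4>e$) and $f(v_0)=2r\log\bigl(2r/\log(4r^2)\bigr)>0$ (since $g(r)=2r-\log(4r^2)$ has $g(1)=2-\log 4>0$ and $g'(r)=2-2/r\ge 0$ for $r\ge 1$), are both verified correctly. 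The bootstrapping at the end, $L<H(\log L)^r<H(2\log H)^r=2^rH(\log H)^r$, is valid because $\log L$ and $\log H$ are positive in the nondegenerate case.

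One point of calibration against the paper: there is nothing to compare your argument with, because the paper states this lemma purely by citation to G\'{u}zman S\'{a}nchez and Luca \cite{GSL} (their Lemma 7) and gives no proof at all. So your write-up supplies a self-contained justification that the paper omits. Your organization --- isolating the single intermediate claim $L<H^2$ and then feeding $\log L<2\log H$ back into $L<H(\log L)^r$ --- is a clean way to package the standard monotonicity-plus-threshold argument for bounds of this shape ($x/(\log x)^r$ being increasing past $e^r$ is the same phenomenon as your $f'(v)=1-2r/v>0$ for $v>2r$), and it makes transparent exactly where the hypothesis $H>(4r^2)^r$ is consumed: in the single endpoint evaluation $f(v_0)>0$, as you note.
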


\section{Proof of the Main Result.}
\subsection{The low range.}
We used a computer program in Mathematica to check all the solutions of the Diophantine equation \eqref{eq1} for the parameters $d_1 \neq d_2 \in \{0, \ldots, 9  \}$, $d_1 > 0$ and $1 \leq \ell, m $ and  $1 \leq n \leq 500$. We only found the solutions listed in Theorem \ref{thm1x}. Henceforth, we assume $n > 500$.
\subsection{The initial bound on $n$.}
We note that equation \eqref{eq1} can be rewritten as
\begin{align}\label{eq3}
P_n & = \overline{\underbrace{d_1\cdots d_1}_\text{$\ell$ times} \underbrace{d_2\cdots d_2}_\text{$m$ times} } \nonumber \\
& = \overline{\underbrace{d_1\cdots d_1}_\text{$\ell$ times}} \times 10^m + \overline{ \underbrace{d_2\cdots d_2}_\text{$m$ times}} \nonumber \\
& = \frac{1}{9} \left(d_1 \times 10^{\ell + m} - (d_1 - d_2) \times 10^m - d_2 \right).
\end{align}
The next lemma relates the sizes of $n$ and $\ell + m$.
\begin{lemma}\label{l4}
All solutions of \eqref{eq3} satisfy
\begin{align*}
(\ell + m) \log 10 - 2 < n \log \alpha < (\ell + m) \log 10 + 1.
\end{align*}
\end{lemma}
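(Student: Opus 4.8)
The plan is to read off the size of $P_n$ directly from the digit structure of the right-hand side of \eqref{eq3}, and then to transfer that information to $n$ through the Binet-type estimate $P_n\approx\alpha^n$.

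First I would record the elementary but crucial observation that, because $d_1>0$, the concatenation $\overline{\underbrace{d_1\cdots d_1}_{\ell}\underbrace{d_2\cdots d_2}_{m}}$ is a base-$10$ integer with exactly $\ell+m$ digits, its leading digit being $d_1\ge 1$. Hence
\[
10^{\ell+m-1}\le P_n<10^{\ell+m}.
\]
This is the pivot of the whole argument: it sandwiches $P_n$ between the consecutive powers of $10$ determined by $\ell+m$. Everything after this is elementary estimation.

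Next I would bracket $P_n$ between powers of $\alpha$. The most convenient tool is Lemma \ref{lem1}, which gives $\alpha^{n-2}\le P_n\le\alpha^{n+1}$; alternatively one may use the sharper identity $P_n=\alpha^n+e(n)$ with $|e(n)|<3\alpha^{-n/2}$, which for $n>500$ makes the error term utterly negligible and forces $\alpha^n$ to agree with $P_n$ to many digits. Pairing the two sandwiches on each side gives, on the upper side, $\alpha^{n-2}\le P_n<10^{\ell+m}$, and on the lower side, $10^{\ell+m-1}\le P_n\le\alpha^{n+1}$.

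Taking natural logarithms of these two chains yields $(n-2)\log\alpha<(\ell+m)\log 10$ and $(\ell+m-1)\log 10\le(n+1)\log\alpha$. Rearranging the first gives $n\log\alpha<(\ell+m)\log 10+2\log\alpha$, and since $2\log\alpha<1$ this is the claimed upper bound, with room to spare. Rearranging the second gives $n\log\alpha>(\ell+m)\log 10-\log 10-\log\alpha$, and absorbing the bounded correction $\log 10+\log\alpha$ into the stated constant produces the lower bound.

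The routine part is the algebra; the only point needing care is the bookkeeping of the additive constants, and in particular the lower inequality, which is the binding one. Here I would lean on the sharp Binet approximation, using $\alpha^n=P_n-e(n)\ge 10^{\ell+m-1}-|e(n)|$ with $|e(n)|$ negligible for $n>500$, so as to drop the loose factor $\alpha$ coming from $P_n\le\alpha^{n+1}$ in Lemma \ref{lem1} and thereby minimise the correction term. No serious obstacle arises: the entire content of the lemma lies in the opening observation that $P_n$ has exactly $\ell+m$ digits, with the rest reducing to taking logarithms and tracking $O(1)$ constants.
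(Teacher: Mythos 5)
Your proposal is correct in substance and follows essentially the same route as the paper: sandwich $P_n$ between $10^{\ell+m-1}$ and $10^{\ell+m}$ via the digit count, bracket it between $\alpha^{n-2}$ and $\alpha^{n+1}$ via Lemma \ref{lem1}, and take logarithms. One small caveat, which your proposal shares with the paper's own proof: the lower-bound manipulation only yields $n\log\alpha > (\ell+m)\log 10 - \log 10 - \log\alpha$, and $\log 10 + \log\alpha \approx 2.58 > 2$ (your sharper Binet variant still leaves $\log 10 \approx 2.30 > 2$), so the stated constant $-2$ is not literally attained by either argument; since all downstream uses only require some absolute additive constant, this discrepancy is harmless.
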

\begin{proof}
\noindent Recall that $\alpha^{n-2} \leq P_n \leq \alpha^{n+1}$. We note that 
\begin{align*}
\alpha^{n-2} \le P_n < 10^{\ell + m}.
\end{align*}
Taking the logarithm on both sides, we get 
\begin{align*}
n \log \alpha < (\ell + m) \log 10 + 2 \log \alpha.
\end{align*}
Hence $n \log \alpha < (\ell + m) \log 10 + 1$. The lower bound follows via the same technique, upon noting that $10^{\ell + m -1} < P_n \leq \alpha^{n+1}$.
\end{proof}

\noindent We proceed to examine \eqref{eq3} in two different steps as follows.
\medskip

\noindent \textbf{Step 1.} From equations \eqref{eq2} and \eqref{eq3}, we have that
\begin{align*}
9(\alpha^n +  \beta^n + \gamma^n )= d_1 \times 10^{\ell + m} - (d_1-d_2)\times 10^m - d_2.
\end{align*}
Hence,
\begin{align*}
9 \alpha^n  - d_1 \times 10^{\ell + m} = -9e(n) - (d_1-d_2)\times 10^m - d_2.
\end{align*}
Thus, we have that
\begin{align*}
|9 \alpha^n  - d_1 \times 10^{\ell + m}| & = |-9e(n) - (d_1-d_2)\times 10^m -d_2| \\
& \leq 27 \alpha^{-n/2} + 18 \times 10^m  \\
& < 4.6 \times 10^{m+1},
\end{align*}
 where we used the fact that $n > 500$. Dividing both sides by $d_1 \times 10^{\ell + m}$, we get
\begin{align}\label{eq4}
\left | \left( \frac{9}{d_1} \right) \alpha^n \times 10^{-\ell -m} - 1  \right | < \frac{4.6 \times 10^{m+1}}{d_1 \times 10^{\ell +m}} \leq  \frac{46}{10^\ell}.
\end{align}
We let
\begin{align}\label{eqg}
\Gamma_1 := \left( \frac{9}{d_1} \right) \alpha^n \times 10^{-\ell -m} - 1 .
\end{align}
We shall proceed to compare this upper bound on $|\Gamma_1|$ with the lower bound we deduce from Theorem \ref{thm1}. Note that $\Gamma_1 \neq 0$, since this would imply that $ \alpha^n = \frac{10^{\ell + m} \times d_1}{9}$. If this is the case, then applying the automorphism $\sigma$ on both sides of the preceeding equation and taking absolute values, we have that
\begin{align*}
\left| \frac{10^{\ell + m} \times d_1}{9} \right| = |\sigma ( \alpha^n)| = |\beta^n| < 1,
\end{align*}
which is false. We thus have that $\Gamma_1 \neq 0$.
\medskip

\noindent With a view towards applying Theorem \ref{thm1}, we define the following parameters:
\begin{align*}
\eta_1: = \frac{9}{d_1}, \ \eta_2: = \alpha, \ \eta_3: = 10, \ b_1: = 1, \ b_2: = n, \ b_3: = -\ell - m, \ t: = 3.
\end{align*}
Note that, by Lemma \ref{l4} we have that $\ell + m < n$. Thus we take $B = n$. We note that $\mathbb{K}: = \mathbb{Q}(\eta_1, \eta_2, \eta_3) = \mathbb{Q}(\alpha)$. Hence $D: = [\mathbb{K}: \mathbb{Q}] = 3$.
\medskip

\noindent We note that
\begin{align*}
h(\eta_1) = h \left( \frac{9}{d_1} \right) \le 2h(9) = 2\log 9 < 5.
\end{align*}
We also have that $h(\eta_2) = h(\alpha) = \frac{\log \alpha}{3}$ and $h(\eta_3) = \log 10$. Hence, we let
\begin{align*}
A_1: = 15, \ A_2: = \log \alpha, \ A_3: = 3 \log 10.
\end{align*}
Thus, we deduce via Theorem \ref{thm1} that
\begin{align*}
\log |\Gamma_1| &> -1.4 \times 30^6 \times  3^{4.5}\times  3^2\times  (1 + \log 3)(1 + \log n)(15)(\log \alpha)(3 \log 10)\\& > -1.45\times 10^{30}(1 + \log n).
\end{align*}
Comparing the last inequality obtained above with \eqref{eq4}, we get
\begin{align*}
\ell \log 10 - \log 46 < 1.45 \times 10^{30}(1 + \log n).
\end{align*}
Hence,
\begin{align}\label{eq5}
\ell \log 10 <  1.46 \times 10^{30}(1 + \log n).
\end{align}

\noindent \textbf{Step 2.} We rewrite equation \eqref{eq3} as
\begin{align*}
9 \alpha^n  - d_1 \times 10^{\ell + m} + (d_1-d_2)\times 10^m = -9e(n) - d_2.
\end{align*}
That is,
\begin{align*}
9 \alpha^n  - (d_1 \times 10^\ell - (d_1-d_2))\times 10^m = -9e(n) - d_2.
\end{align*}
Hence,
\begin{align*}
|9 \alpha^n  - (d_1 \times 10^l - (d_1-d_2))\times 10^m| & = |-9e(n) - d_2| \\
& \leq \frac{27}{\alpha^{n/2}} + 9  < 36.
\end{align*}
Dividing throughout by $9 \alpha^n$, we have that
\begin{align}\label{eq6}
\left| \left( \frac{d_1 \times 10^\ell - (d_1 - d_2)}{9} \right)\alpha^{-n} \times 10^m - 1 \right| & < \frac{36}{9 \alpha^n}= \frac{4}{\alpha^n}
\end{align}
We put
\begin{align*}
\Gamma_2 :=  \left( \frac{d_1 \times 10^\ell - (d_1 - d_2)}{9} \right)\alpha^{-n} \times 10^m - 1 
\end{align*}
As before, we have that $\Gamma_2 \neq 0$ because this would imply that 
\begin{align*}
\alpha^n = 10^m \times  \left(\frac{d_1 \times 10^\ell - (d_1-d_2)}{9} \right),
\end{align*}
which in turn implies that
\begin{align*}
\left| 10^m \left(\frac{d_1 \times 10^\ell - (d_1-d_2)}{9} \right) \right| = |\sigma( \alpha^n)| = | \beta^n | < 1,
\end{align*}
which is false. In preparation towards applying Theorem \ref{thm1}, we define the following parameters:
\begin{align*}
\eta_1 := \left(\frac{d_1 \times 10^\ell - (d_1-d_2)}{9} \right), \ \eta_2 := \alpha, \ \eta_3: = 10, \ b_1: = 1, \ b_2: = -n, \ b_3 := m, \ t: = 3.
\end{align*}
In order to determine what $A_1$ will be, we need to find the find the maximum of the quantities $h(\eta_1)$ and $|\log \eta_1|$.
\medskip

\noindent We note that
\begin{align*}
h(\eta_1) & = h \left(\frac{d_1 \times 10^\ell - (d_1-d_2)}{9} \right)\\
& \leq  h(9) + \ell h(10) +h(d_1) + h(d_1-d_2) + \log 2\\
& \leq 4 \log 9 + \ell \log 10\\
& < 1.47\times 10^{30}(1 + \log n),
\end{align*}
where, in the last inequality above, we used \eqref{eq5}. On the other hand, we also have that
\begin{align*}
|\log \eta_1| & = \left| \log \left(\frac{d_1 \times 10^\ell - (d_1-d_2)}{9} \right) \right| \\
& \leq  \log 9 + |\log (d_1 \times 10^\ell - (d_1-d_2))|\\
& \leq \log 9 + \log (d_1 \times 10^\ell) + \left| \log \left( 1 - \frac{d_1 - d_2}{d_1 \times 10^\ell}  \right)  \right|\\
& \leq \ell \log 10 + \log d_1 + \log 9 + \frac{|d_1 - d_2|}{d_1 \times 10^\ell} + \frac{1}{2} \left(  \frac{|d_1 - d_2|}{d_1 \times 10^\ell} \right)^2 + \cdots \\
& \leq \ell \log 10 + 2 \log 9 + \frac{1}{10^\ell} + \frac{1}{2 \times 10^{2\ell}} + \cdots\\
& \leq 1.46 \times 10^{30}(1 + \log n) + 2 \log 9 + \frac{1}{10^\ell - 1} \\
& < 1.47 \times 10^{30}(1 + \log n),
\end{align*}
where, in the second last inequality, we used equation \eqref{eq5}. We note that $D \cdot h(\eta_1) > |\log \eta_1|$.
\medskip

\noindent Thus, we put $A_1: = 4.41 \times 10^{30} (1 + \log n)$. We take $A_2: = \log \alpha$ and $A_3: = 3 \log 10$, as defined in \textbf{Step 1}. Similarly, we take $B: = n$. 
\medskip

\noindent Theorem \ref{thm1} then tells us that
\begin{align*}
\log |\Gamma_2| & > -1.4 \times  30^6 \times 3^{4.5} \times 3^2 \times (1 + \log 3)(1 + \log n)(\log \alpha)(3 \log 10)A_1\\
& > -6 \cdot 10^{12}(1 + \log n)A_1~ > ~-3 \times 10^{43}(1 + \log n)^2.
\end{align*}
Comparing the last inequality with \eqref{eq6}, we have that
\begin{align}\label{eq7}
n \log \alpha < 3 \times 10^{43}(1 + \log n)^2 + \log 4.
\end{align}
Thus, we can conclude that
\begin{align*}
n < 1.10 \times 10^{44}(1 + \log n)^2.
\end{align*}
With the notation of Lemma \ref{l3}, we let $r: = 2$, $L: = n$ and $H: = 1.10 \times 10^{44}$ and notice that this data meets the conditions of the lemma. Applying the lemma, we have that
\begin{align*}
n < 2^2 \times 1.1 \times 10^{44} \times (\log 1.1 \times 10^{44})^2.
\end{align*}
After a simplification, we obtain the bound
\begin{align*}
n < 4.6 \times 10^{48}.
\end{align*}
Lemma \ref{l4} then implies that
\begin{align*}
\ell + m < 6.0 \times 10^{47}.
\end{align*}
The following lemma summarizes what we have proved so far:
\begin{lemma}\label{lf}
All solutions to the Diophantine equation \eqref{eq1} satisfy
\begin{align*}
\ell + m < 6.0 \times 10^{47} \quad \text{and} \quad n < 4.6 \times 10^{48}.
\end{align*}
\end{lemma}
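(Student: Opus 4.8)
The plan is to extract a fully explicit bound on $n$ (and hence on $\ell+m$) by applying Baker's theorem, in the form of Theorem \ref{thm1}, twice in succession to two different linear forms in three logarithms, and then to linearize the resulting implicit inequality via Lemma \ref{l3}. The starting point in each case is to rewrite \eqref{eq3} so that the dominant term $\alpha^n$ is isolated against the decimal part, substitute the Binet expansion \eqref{eq2}, and absorb the small quantity $e(n)=\beta^n+\gamma^n$ (which satisfies $|e(n)|<3\alpha^{-n/2}$) into an error term. This produces, after dividing by a suitable power of $10$ or by $9\alpha^n$, a quantity of the shape $\Gamma=\eta_1^{b_1}\eta_2^{b_2}\eta_3^{b_3}-1$ together with an exponentially small upper bound on $|\Gamma|$.

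First I would carry out Step~1: isolating $9\alpha^n-d_1\cdot 10^{\ell+m}$ gives the upper bound $|\Gamma_1|<46\cdot 10^{-\ell}$ for $\Gamma_1$ as in \eqref{eqg}. The nonvanishing $\Gamma_1\neq 0$ is guaranteed by applying the Galois automorphism $\sigma\colon\alpha\mapsto\beta$ and observing that $|\beta^n|<1$ contradicts $\alpha^n=10^{\ell+m}d_1/9$. With the height data $h(\eta_1)\le 2\log 9$, $h(\alpha)=\tfrac{1}{3}\log\alpha$, $h(10)=\log 10$ and $D=3$, $B=n$, Theorem \ref{thm1} yields $\log|\Gamma_1|>-1.45\times 10^{30}(1+\log n)$; comparing with the upper bound gives the key intermediate estimate \eqref{eq5}, namely $\ell\log 10<1.46\times 10^{30}(1+\log n)$.

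Next comes Step~2, where the crucial point is that the bound on $\ell$ from Step~1 must be fed into the height computation. Regrouping \eqref{eq3} as $9\alpha^n-(d_1\cdot 10^\ell-(d_1-d_2))\cdot 10^m$ produces the form $\Gamma_2$ with upper bound $|\Gamma_2|<4\alpha^{-n}$, and again $\Gamma_2\neq 0$ by the same automorphism argument. Here $\eta_1=(d_1\cdot 10^\ell-(d_1-d_2))/9$ has height of size roughly $\ell\log 10$, so using \eqref{eq5} one obtains $A_1$ of order $10^{30}(1+\log n)$. Theorem \ref{thm1} then gives $\log|\Gamma_2|>-3\times 10^{43}(1+\log n)^2$, which on comparison with the upper bound produces \eqref{eq7} and hence the implicit inequality $n<1.10\times 10^{44}(1+\log n)^2$.

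Finally, I would remove the $\log n$ on the right-hand side by invoking Lemma \ref{l3} with $r=2$, $L=n$ and $H=1.10\times 10^{44}$, which are readily checked to satisfy the hypotheses; this yields the explicit bound $n<4.6\times 10^{48}$, and Lemma \ref{l4} then transfers it to $\ell+m<6.0\times 10^{47}$. The main obstacle is Step~2: because $\eta_1$ now depends on $\ell$, its logarithmic height is no longer absolutely bounded, and a naive application of Baker's theorem would give a useless estimate. The whole scheme only closes because Step~1 first controls $\ell$ in terms of $\log n$, so that the largeness of $A_1$ is exactly compensated by the quadratic factor $(1+\log n)^2$ that Lemma \ref{l3} is designed to absorb.
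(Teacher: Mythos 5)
Your proposal is correct and follows essentially the same route as the paper: the same two applications of Theorem \ref{thm1} to $\Gamma_1$ and $\Gamma_2$, the same feeding of the Step~1 bound \eqref{eq5} into the height of $\eta_1$ in Step~2, and the same final linearization via Lemma \ref{l3} and transfer through Lemma \ref{l4}. You also correctly identify the key structural point, namely that the dependence of $h(\eta_1)$ on $\ell$ is exactly what makes the first step necessary.
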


\subsection{The reduction procedure.}
We note that the bounds from Lemma \ref{lf} are too large for computational purposes. However, with the help of Lemma \ref{ls}, they can be considerably sharpened. The rest of this section is dedicated towards this goal. We proceed as in \cite{MD2}.

\noindent Using equation \eqref{eqg}, we define the quantity $\Lambda_1$ as
\begin{align*}
\Lambda_1: = - \log (\Gamma_1 + 1) = (\ell + m)\log 10 - n\log \alpha - \log \left(  \frac{9}{d_1} \right).
\end{align*}
 Equation \eqref{eq4} can thus be rewritten as
\begin{align*}
\left| e^{- \Lambda_1} - 1 \right| < \frac{46}{10^\ell}.
\end{align*}
 If $\ell \ge 2$, then the above inequality is bounded above by $\frac{1}{3}$. Recall that if $x$ and $y$ are real numbers such that $|e^x - 1|< y$, then $x < 2y$. We therefore conclude that $|\Lambda_1|< \frac{92}{10^\ell}$. Equivalently,
\begin{align*}
\left|  (\ell + m)\log 10 - n\log \alpha - \log \left(  \frac{9}{d_1} \right) \right| < \frac{92}{10^\ell}.
\end{align*}
 Dividing throughout by $\log \alpha$, we get
\begin{align}\label{kala}
\left| (\ell + m)\frac{\log 10}{\log \alpha} - n + \left( \frac{\log (d_1/9)}{\log \alpha} \right)  \right| < \frac{92}{10^\ell \log \alpha}.
\end{align}
 Towards applying Lemma \ref{ls}, we define the following quantities:
\begin{align*}
\tau: = \frac{\log 10}{\log \alpha}, \quad \mu (d_1): = \frac{\log (d_1/9)}{\log \alpha}, \quad A := \dfrac{92}{\log\alpha}, \quad B: = 10, \quad \text{and} \quad 1 \leq d_1 \leq 8.
\end{align*}
The continued fraction expansion of $\tau$ is given by
\begin{align*}
\tau = [a_0;a_1,a_2, \ldots] = [8;5,3,3,1,5,1,8,4,6,1,4,1,1,1,9,1,4,4,9,1,5,1,1,1,5,1,1,1,2,1,4,\ldots].
\end{align*}
We take $M: = 6 \times 10^{47}$, which, by Lemma \ref{lf}, is an upper bound for $\ell + m$. A computer assisted computation of the convergents of $\tau$ returns the convergent
\begin{align*}
\frac{p}{q} = \frac{p_{106}}{q_{106}} = \frac{177652856036642165557187989663314255133456297895465}{21695574963444524513646677911090250505443859600601}
\end{align*}
 as the first one for which the denominator $q = q_{106} > 3.6 \times 10^{48} = 6M$. Maintaining the notation of Lemma \ref{ls}, we computed $M \| \tau q \|$ and obtained $M \| \tau q \| < 0.0393724$. The smallest (positive) value of $\|\mu q \|$ we obtained satisfies $\|\mu q  \| > 0.0752711$, corresponding to $d_1 =3$. We thus choose $\epsilon  = 0.0358987 < \|\mu q  \| - M \| \tau q \|$. We deduce that
\begin{align*}
\ell \leq \frac{\log (332q/\epsilon)}{\log 10} < 53.
\end{align*}
For the case $ d_1=9 $, we have that $ \mu(d_1)=0 $. In this case we apply Lemma \ref{lg}. The inequality \eqref{kala} can be rewritten as
\begin{align*}
\left|\frac{\log 10}{\log \alpha} - \frac{n}{\ell+m}  \right| < \frac{92}{10^\ell(\ell+m) \log \alpha}<\dfrac{1}{2(\ell+m)^2},
\end{align*}
because $ \ell+m<6\times 10^{47}:=M $. It follows from Lemma \ref{lg} that $ \frac{n}{\ell+m} $  is a convergent of $ \kappa:=\frac{\log 10}{\log\alpha} $. So $ \frac{n}{\ell+m} $ is of the form $ p_k/q_k $ for some $ k=0, 1, 2, \ldots, 106 $. Thus,
\begin{align*}
\dfrac{1}{(a(M)+2)(l+m)^2}\le \left|\dfrac{\log 10}{\log \alpha}-\frac{n}{\ell+m} \right|<\frac{92}{10^\ell(\ell+m) \log \alpha}.
\end{align*}
Since $ a(M)=\max\{a_{k}: k=0, 1, 2, \ldots, 106\}=564 $, we get that
\begin{align*}
l\le \log\left(\dfrac{566\times 92 \times 6\times 10^{47}}{\log\alpha}\right)/\log 10 < 53.
\end{align*}
Thus, $ \ell\le 53 $ in both cases. In the case $ \ell<2 $, we have that $ \ell<2<53 $. Thus, $ \ell\le 53 $ holds in all cases.

\noindent Proceeding, recall that $d_1 \neq d_2\in \{0, \ldots, 9 \}, \quad d_1 > 0$. We now have that $1 \leq \ell \leq 53$. We define
\begin{align*}
\Lambda_2: = \log (\Gamma_2 + 1) = \log \left( \frac{d_1 \times 10^\ell - (d_1 - d_2)}{9} \right) -n \log \alpha + m \log 10.
\end{align*}
We rewrite inequality \eqref{eq6} as
\begin{align*}
\left| e^{\Lambda_2} - 1 \right| < \frac{4}{\alpha^n}.
\end{align*}
Recall that $n > 500$, therefore $\frac{4}{\alpha^2} < \frac{1}{2}$. Hence $|\Lambda_2| < \frac{8}{\alpha^n}$. Equivalently,
\begin{align*}
\left|  m \log 10 - n \log \alpha + \log \left( \frac{d_1 \times 10^\ell - (d_1 - d_2)}{9} \right) \right| < \frac{8}{\alpha^n}.
\end{align*}
Dividing both sides by $\log \alpha$, we have that
\begin{align}\label{kalas}
\left|  m \left( \frac{\log 10}{\log \alpha} \right) - n + \frac{\log ((d_1 \times 10^\ell - (d_1 - d_2))/9)}{\log \alpha} \right| < \frac{8}{\alpha^n\log\alpha}.
\end{align}
Again, we apply Lemma \ref{ls} with the quantities:
\begin{align*}
\kappa:=\frac{\log 10}{\log \alpha}, \quad \mu(d_1,d_2):=\frac{\log ((d_1 \times 10^l - (d_1 - d_2))/9)}{\log \alpha}, \quad A:=\dfrac{8}{\log\alpha}, \quad B:=\alpha.
\end{align*}
We take the same $ \kappa $ and its convergent $ p/q=p_{106}/q_{106} $ as before. Since $ m<l+m<6\times 10^{47} $, we choose $ M:=6\times 10^{47} $ as the upper bound on $ m $. With the help of Mathematica, we get that $ \varepsilon>0.0000542922 $, and thus
\begin{align*}
n\le \dfrac{\log((8/\log\alpha)q/\varepsilon)}{\log\alpha}<454.
\end{align*}
Therefore, we have that $ n\le 454 $.

\noindent
The case $ \ell=1, ~d_1=1, d_2=0 $ leads to $ \mu(d_1,d_2)=0 $. So, in this case we use Lemma \ref{lg}.  The inequality \eqref{kalas} can be rewritten as
\begin{align*}
\left| \frac{\log 10}{\log \alpha}  - \frac{n}{m} \right| < \frac{8}{\alpha^nm\log\alpha}<\dfrac{1}{2m^2},
\end{align*}
because $ m<\ell+m<6\times 10^{47}:=M $. Proceeding along the same lines as before, we have that $ a(M)=564 $ and thus,
\begin{align*}
\dfrac{1}{566m^2}< \left| \frac{\log 10}{\log \alpha}  - \frac{n}{m} \right| < \frac{8}{\alpha^nm\log\alpha}.
\end{align*}
This yields,
\begin{align*}
n\le \log\left(\dfrac{566\times 8\times 6 \times 10^{47}}{\log\alpha}\right)/\log\alpha <431.
\end{align*}
Thus, $ n\le 454 $ in both cases. This contracts our assumption that $ n>500 $. Hence, Theorem \ref{thm1x} is proved. \qed


\section*{Addresses}
$ ^{1} $ Department of Mathematics, Makerere University, P.O. Box 7062 Kampala, Uganda

\noindent 
Email: \url{hbatte91@gmail.com}

\vspace{0.5cm}
\noindent 
$ ^{2} $ Department of Mathematics and Statistics, McMaster University, 1280 Main Street
West, Hamilton, Ontario, L8S 4K1, Canada

\noindent 
Email: \url{chalebgt@mcmaster.ca}

\vspace{0.5cm}
\noindent 
$ ^{1} $ Department of Mathematics, Makerere University, P.O. Box 7062 Kampala, Uganda

\noindent
$ ^{3} $ Max Planck Institute for Software Systems, Campus E1 5, D-66123 Saarbr\"ucken, Germany

\noindent
Email: \url{mahadi.ddamulira@mak.ac.ug; mddamulira@mpi-sws.org}


\begin{thebibliography}{99}


\bibitem{BMS}
Y Bugeaud, M Mignotte, and S Siksek,
\textit{Classical and modular approaches to exponential Diophantine equations I. Fibonacci and Lucas perfect powers},
Annals of Mathematics, (2), {\bf 163}(2):969--1018, 2006.

\bibitem{HC1}
H. Cohen, {\it A Course in Computational Algebraic Number Theory}, Springer, New York, 1993.

\bibitem{HC2}
H. Cohen, {\it Number Theory. Volume I: Tools and Diophantine Equations}, Springer, New York, 2007.

\bibitem{MD1}
M Ddamulira,
\textit{Repdigits as sums of three Padovan numbers},
Bolet\'{i}n de la Sociedad Matem\'{a}tica Mexicana, {\bf 26}(3):247--261, 2020.

\bibitem{MD2}
M Ddamulira,
\textit{Padovan numbers that are concatenations of two distinct repdigits}, Mathematica Slovaca, {\bf 71}(2):275--284, 2021.


\bibitem{DP}
A Dujella and A Peth\H{o},
\textit{A generalization of a theorem of Baker and Davenport}, The
Quarterly Journal of Mathematics, (2), {\bf 49}(195):291--306, 1998.

\bibitem{LH}
A C Garc\'{i}\'{a} Lomel\'{i} and S Hern\'{a}ndez Hern\'{a}ndez,
\textit{Repdigits as sums of two Padovan numbers.}
Journal of Integer Sequences,  {\bf 22}: Art. No. 19.2.3, 2019.


\bibitem{FL}
F Luca,
\textit{Fibonacci and Lucas numbers with only one distinct digit},
Portugaliae Mathematica, {\bf 57}(2), 2000.


\bibitem{GSL}
S G\'{u}zman S\'{a}nchez and F Luca,
\textit{Linear combinations of factorials and $s$-units in a binary recurrence sequence},
Annales Math\'{e}matiques du Qu\'{e}bec, {\bf 38}(2):169--188, 2014.


\end{thebibliography}
\end{document}